\theoremstyle{definition}
\newtheorem{theorem}{Theorem}[section]
\newtheorem{lemma}[theorem]{Lemma}
\newtheorem{corollary}[theorem]{Corollary}
\newtheorem{construction}[theorem]{Construction}
\newtheorem{proposition}[theorem]{Proposition}
\def\arraystretch{1,5}
\def \ZZ {\mathbb {Z}}
\newcommand{\modulo}[3]{#1 \equiv #2 \: (\textrm{mod }#3)}
\newcommand{\modul}[2]{#1 \: (\textrm{mod }#2)}
\newcommand\blfootnote[1]{%
  \begingroup

  \renewcommand\thefootnote{}\footnote{#1}%

  \addtocounter{footnote}{-1}%

  \endgroup

}
\begin{document}

\begin{center}
\Large{\textbf{Tetravalent distance magic graphs of small order and an infinite family of examples}} \\ [+4ex]
Ksenija Rozman{\small$^{a,*}$}, Primo\v z \v Sparl{\small$^{a, b, c}$} \\ [+2ex]
{\it \small
$^a$Institute of Mathematics, Physics and Mechanics, Ljubljana, Slovenia\\
$^b$University of Ljubljana, Faculty of Education, Ljubljana, Slovenia\\
$^c$University of Primorska, Institute Andrej Maru\v si\v c, Koper, Slovenia\\
}
\end{center}

%\footnotetext{$^*$-corresponding author\\

\blfootnote{

Email addresses:
ksenija.rozman@pef.uni-lj.si (Ksenija Rozman),
primoz.sparl@pef.uni-lj.si (Primo\v z \v Sparl)

* - corresponding author
}

\hrule

\begin{abstract}

A graph of order $n$ is {\em distance magic} if it admits a bijective labeling of its vertices with integers from $1$ to $n$ such that each vertex has the same sum of the labels of its neighbors.

This paper contributes to the long term project of characterizing all tetravalent distance magic graphs. With the help of a computer we find that out of almost nine million connected tetravalent graphs up to order $16$ only nine are distance magic. In fact, besides the six well known wreath graphs there are only three other examples, one of each of the orders $12$, $14$ and $16$. We introduce a generalization of wreath graphs, the so-called quasi wreath graphs, and classify all distance magic graphs among them. This way we obtain infinitely many new tetravalent distance magic graphs. Moreover, the two non-wreath graphs of orders $12$ and $14$ are quasi wreath graphs while the one of order $16$ can be obtained from a quasi wreath graph of order $14$ using a simple construction due to Ková\v{r}, Fronček and Ková\v{r}ová.

\end{abstract}

\hrule

\begin{quotation}

\noindent {\em \small Keywords: } distance magic, tetravalent, quasi wreath graph

\end{quotation} 

\section{Introduction}

Graph labelings are assignments of labels (in most cases integers) to vertices and/or edges of graphs. They were first introduced in mid 1960s and have since attracted many mathematicians, as is witnessed by the wide variety of graph labeling variants studied and thousands of papers on the topic published (a list is compiled in the comprehensive survey by Gallian \cite{Gallian}). In this paper we focus on distance magic labelings which were first studied independently by Vilfred \cite{Vilfred} and by Miller, Rodger and Simanjuntak \cite{Miller} under the names sigma labelings and 1-vertex magic vertex labelings, respectively (see also \cite[Section 5.6]{Gallian}).

According to the definition which has been commonly used ever since its introduction in 2009 in \cite{SFM} (but see Section 2, where we give a somewhat different definition which appears to be more convenient for our purposes), a {\em distance magic labeling} of a graph of order $n$ is a bijective labeling of vertices with integers from 1 to $n$, such that the {\em weight} of each vertex $v$ (that is, the sum of the labels of the neighbors of $v$) is equal to a constant $\kappa$, called the {\em magic constant}. A graph is said to be {\em distance magic} if there exists a distance magic labeling of its vertices. It is well known and easy to see that in the case of regular graphs of valency $r$, the distance magic constant is $r(n+1)/2$, implying that there are no $r$-regular distance magic graphs with $r$ odd \cite{Miller}. While it is clear that the only connected 2-regular distance magic graph is the 4-cycle, the situation with 4-regular graphs seems to be much more intriguing. In fact, even though Rao proposed the problem of characterizing all 4-regular distance magic graphs already back in 2008 \cite{Rao}, we thus far only have a few partial results, for instance \cite{ACP, CF, KFK, MS, RSP, RS} (we do mention that there are several other results on distance magic graphs in general, see for instance \cite{Gallian, GodSin18, MS6}).

The first natural step in the investigation of the above mentioned problem is to determine the orders for which connected tetravalent distance magic graphs exist. That at least one example exists for each even order $n \ge 6$ is easy to see as one can simply take the wreath graph $W(n/2)$ of order $n$ (see Section \ref{sec:2} for the definition). For odd orders it was first shown in \cite{KFK} (using a computer) that there are no examples up to order 15 and then using a nice simple construction that at least one example exists for each odd order $n \ge 17$. 

One of the next natural steps is to investigate small tetravalent distance magic graphs and try to identify nice infinite families to which these graphs belong. Using a computer we verify (see Section \ref{small graphs}) that distance magic examples are extremely rare among all tetravalent graphs, at least for small orders. In fact, up to order 16 there are only nine connected distance magic tetravalent graphs, namely the wreath graphs $W(n)$, $3 \le n \le 8$, and three additional examples, one of each of the orders $12$, $14$ and $16$.

In Section \ref{small graphs} we introduce the family of so-called {\em quasi wreath graphs} (which contains the above non-wreath graphs of orders 12 and 14) and classify its distance magic members. The following is our main result (see Section \ref{small graphs} for definitions).

\begin{theorem}\label{t1}
The quasi wreath graph $QW(S)$ is distance magic if and only if it consists of any number of segments of length congruent to 3 modulo 4 and an even number of segments of length congruent to 1 modulo 4. 
\end{theorem}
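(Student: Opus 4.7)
The plan is to prove the two implications of the biconditional by rather different techniques: for necessity, I would derive algebraic obstructions from the magic condition; for sufficiency, I would construct an explicit distance magic labeling of $QW(S)$ whenever the stated conditions on segment lengths hold. Throughout I would use the fact that the magic constant is $\kappa = 2(n+1)$, which is forced by summing weights over all vertices of the tetravalent graph $QW(S)$ of order $n$.

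For the necessity direction, I would start with a putative distance magic labeling $f$ and exploit the segment structure of $QW(S)$ by considering a signed sum $\sum_v \varepsilon_v\, w(v) = \kappa \sum_v \varepsilon_v$ taken over the vertices of a single segment, with signs $\varepsilon_v \in \{-1, +1\}$ alternating along the positions of the segment. By the local adjacency pattern of a quasi wreath graph, the labels of most interior vertices should cancel in pairs, leaving a short linear expression involving only labels at the segment boundary and at the neighboring segments. Reading these identities modulo small integers, I expect even-length segments to be excluded outright, length-3-mod-4 segments to contribute no obstruction, and each length-1-mod-4 segment to carry a mod-2 defect whose sum around the cyclic structure of $QW(S)$ must vanish; this would force the number of length-1-mod-4 segments to be even.

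For the sufficiency direction, I would construct a distance magic labeling by assembling local labelings segment by segment. The natural template within a segment is to use complementary pairs $(a, n+1-a)$ placed at carefully chosen positions so that every interior vertex sees exactly one such pair among its four neighbors, yielding weight $\kappa$ automatically. Segments of length $\equiv 3 \pmod 4$ should admit such a fully self-balancing labeling, hence any number of them causes no problem. In a segment of length $\equiv 1 \pmod 4$ there is one leftover position that cannot be paired internally; two such segments would then be coupled so that their two unpaired positions jointly accept one additional complementary pair, canceling each other's defect. The remaining freedom in choosing which complementary pairs of $\{1,2,\ldots,n\}$ go to which segment should be enough to ensure the labeling is a bijection.

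The main obstacle I anticipate is the sufficiency direction, and more specifically the global bookkeeping required to stitch the local labelings together. One has to choose a cyclic pairing of the length-1-mod-4 segments consistent with their arrangement in $QW(S)$, distribute the complementary pairs from $\{1,2,\ldots,n\}$ across all segments without repetition, and verify that the boundary vertices shared between adjacent segments also receive weight $\kappa$ (since their neighborhoods straddle two segments and hence lie outside the self-balancing analysis for interior vertices). I would handle this by first fixing a cyclic pairing of the length-1-mod-4 segments, then reducing the label-assignment problem to a combinatorial matching between complementary pairs and positions, using the explicit obstructions identified in the necessity analysis as a guide to which matchings are forced.
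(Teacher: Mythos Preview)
Your overall two-part plan (algebraic obstructions for necessity, explicit construction for sufficiency) matches the paper's, and your sufficiency sketch is close in spirit to what the paper does: it builds an explicit labeling segment by segment using antipodal pairs (in the paper's shifted convention these are pairs $\pm c$), and it couples the length-$1$-mod-$4$ segments in pairs exactly as you propose, swapping two labels between the $i$-th and the $i'$-th segment of type~B.

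The genuine gap is in your necessity argument, specifically the step ``even-length segments are excluded outright by reading these identities modulo small integers.'' The paper's exclusion of even segments is \emph{not} a congruence argument and I do not see how to make your proposed one work. The paper instead passes to block labels $\ell_i = \ell(x_i)+\ell(y_i)$, derives the recursion $\ell_{i+2}=-\ell_i$ inside a segment together with $\ell_{i+2}=-\tfrac12(\ell_i+\ell_{i+1})$ and $\ell_{i+2}=-2\ell_i-\ell_{i+1}$ across the two kinds of segment boundary, and then runs an \emph{extremal} argument: pick the block $B_k$ with $|\ell_k|$ maximal, write the two initial block labels of its segment as $a,b$, and observe that the neighbouring blocks outside the segment have labels $-a-2b$ and $\pm(2a+b)$ (the sign depending on the segment length mod~$4$). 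Maximality then forces $\max\{|a|,|b|\}\ge\max\{|a+2b|,|2a+b|\}$ when the length is even, hence $b=-a$; but a short injectivity check (two vertices would get the same label) rules this out. Your telescoping signed sum will recover linear relations among boundary labels, but those relations alone are consistent with nontrivial solutions; it is the combination of the inequality from maximality and the injectivity of the labeling that kills the even case. For the parity of the number of length-$1$-mod-$4$ segments, the paper then observes (still with $b=0$ forced by the same extremal step in the odd case) that each such segment contributes exactly $a\neq 0$ to the total vertex-label sum while length-$3$-mod-$4$ segments contribute $0$, so the total-sum constraint forces evenness---this is additive rather than a mod-$2$ defect, though your description is morally compatible with it.

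In short: keep your sufficiency plan, but for necessity you should replace the ``modular obstruction'' for even segments by the block-label recursion plus the extremal/injectivity step; without that, the argument does not close.
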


The family of quasi wreath graphs thus provides infinitely many tetravalent distance magic graphs. In fact, as we explain in Section \ref{sec:future}, it provides at least one non-wreath distance magic example for each even order $n \ge 12$, except for $n =16$. Moreover, the only non-wreath example of order 16 can be obtained from the distance magic quasi wreath graph of order 14 using the simple construction from \cite{KFK}.

\section{Preliminaries} \label{sec:2}
In this paper we consider only finite, connected, simple and undirected graphs. First we give some basic notation that we use. For a graph $\Gamma$ we denote its vertex set by $V(\Gamma)$, its edge set by $E(\Gamma)$, and the neighborhood of a vertex $u \in V(\Gamma)$, which is the set of vertices adjacent to $u$, by $N(u)$. The ring of residue classes modulo $n$, where $n$ is a positive integer, is denoted by $\ZZ_n$. 

As mentioned in the Introduction we now give a slightly different definition of a distance magic labeling of a tetravalent graph, which however leads to the same definition of being distance magic. Let $\Gamma =(V, E)$ be a tetravalent graph of order $n$ and let $N = \{1-n, 3-n, \ldots, n-1\}$. Then a {\em distance magic labeling} of $\Gamma$ is a bijection $\ell \colon V \longrightarrow N$ such that the weight of each vertex is equal to $0$. 

Clearly, if $\ell$ is a distance magic labeling according to this definition, then setting $\tilde{\ell}(u)=(\ell(u)+n+1)/2$ for each vertex $u$ one gets a distance magic labeling according to \cite{SFM} and conversely, if $\tilde{\ell}$ is a distance magic labeling according to \cite{SFM}, then setting $\ell(u)=2 \tilde{\ell}(u)-1-n$ for each vertex $u$ one gets a distance magic labeling according to our definition. Throughout the rest of this paper we will thus always be working with our definition of a distance magic labeling. We point out that in this setting the weight of each vertex for such a labeling is 0. 

For $n \ge 3$, the {\em wreath graph} $W(n)$ is the tetravalent graph of order $2n$ with vertex set $V=\{u_i \colon i \in \ZZ_n\} \cup \{v_i \colon i \in \ZZ_n\}$ where for each $i \in \ZZ_n$ it holds that $N(u_i)=N(v_i)=\{u_{i\pm 1}, v_{i \pm 1}\}$, all indices computed modulo $n$. Clearly, the graphs $W(n)$, $n \ge 3$, are all distance magic, since one can simply label the pair $u_i, v_i$ by $2n-2i-1$ and $-(2n-2i-1)$ (but see also \cite[Theorem 1.1]{MS}). 

\section {The quasi wreath graphs } \label{small graphs}

In 1999 Meringer \cite{Meringer} developed an algorithm enabling construction of regular graphs of given valency up to a specified small enough order. The database of all connected tetravalent graphs of small orders is available at \cite{Meringer} (see also \cite{Houseofgraphs}). It turns out that there are $906\,331$ connected tetravalent graphs up to order 15. To analyze which of them are distance magic we used the following idea pointed out in \cite[Lemma 2.1]{MS} and its immediate corollary.

\begin{lemma}\cite{MS} \label{ev}
Let $\Gamma=(V,E)$ be a regular graph of even valency and order $n$. Then $\Gamma$ is distance magic if and only if 0 is an eigenvalue of $\Gamma$ and there exists an eigenvector for the eigenvalue 0 with the property that a certain permutation of its entries results in the arithmetic sequence $ 1-n, 3-n, 5-n, \ldots ,n-3, n-1. $
\end{lemma}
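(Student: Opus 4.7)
The plan is to interpret a labeling of the vertices as a vector indexed by $V$ and then recognize that the distance magic condition is exactly the statement that the adjacency matrix $A$ of $\Gamma$ annihilates this vector. Concretely, if we fix an ordering of $V=\{v_1,\ldots,v_n\}$ and regard a labeling $\ell \colon V \to \RR$ as the column vector $x=(\ell(v_1),\ldots,\ell(v_n))^T$, then for every vertex $v_i$ the $i$-th entry of $Ax$ equals $\sum_{u \in N(v_i)} \ell(u)$, which is precisely the weight of $v_i$. Under our convention the magic constant is $0$, so the weight condition is equivalent to $Ax = 0$, i.e.\ to $x$ being an eigenvector of $A$ for the eigenvalue $0$ (or the zero vector, which is excluded once we require $\ell$ to be a bijection to $N$).

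For the forward direction I would simply start with a distance magic labeling $\ell$, form its vector $x$ and note two things: first, its entries are, by definition, a bijective image of $V$ onto $N=\{1-n,3-n,\ldots,n-1\}$, so some permutation of them is the arithmetic sequence in question; second, by the observation above, $Ax=0$, and $x \neq 0$ since its entries are distinct (and not all zero as $n \ge 2$). Hence $0$ is an eigenvalue of $\Gamma$ witnessed by such an eigenvector.

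For the converse I would take an eigenvector $x$ for the eigenvalue $0$ whose entries, after some permutation, form the sequence $1-n,3-n,\ldots,n-1$, and read it off as a labeling: assign to each vertex $v_i$ the $i$-th entry of $x$. The permutation condition guarantees that this assignment is a bijection onto $N$, and the eigenvector condition $Ax=0$ guarantees that every vertex has weight $0$. Therefore the labeling is distance magic.

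There is no real obstacle here; the lemma is essentially a reformulation of the definition after the change of variable performed in Section~\ref{sec:2} which shifts the magic constant to $0$. The only point that deserves a brief comment is that the even valency assumption is used only to ensure that $n$ and $N$ have compatible parity and that the regular distance magic problem is not vacuous (odd-valent regular distance magic graphs being ruled out in the Introduction); the eigenvector/weight correspondence itself works for arbitrary regular graphs.
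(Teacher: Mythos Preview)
Your argument is correct and is exactly the natural one: identify a labeling with a vector, observe that the weight vector is $Ax$, and use that the normalized magic constant is $0$ so that the distance magic condition becomes $Ax=0$ together with the bijectivity constraint on the entries. Note, however, that the paper does not actually prove this lemma; it is quoted from \cite{MS} and stated without proof, so there is no in-paper argument to compare against. Your write-up matches what one would expect the original proof in \cite{MS} to be.
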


\begin{corollary} \label{cor}
Let $\Gamma=(V,E)$ be a regular graph of even valency and order $n$. Suppose that 0 is an eigenvalue of $\Gamma$ and let $B$ be the basis of the corresponding eigenspace. If there exist integers $i,j$ with $1 \le i < j \le n$ such that $b(i)=b(j)$ for all $b \in B$ then $\Gamma$ is not distance magic.
\end{corollary}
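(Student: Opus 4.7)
The plan is to derive the corollary as a direct consequence of Lemma \ref{ev} by exploiting the fact that the equality of two coordinates is preserved under linear combinations. First I would observe that, by Lemma \ref{ev}, if $\Gamma$ is distance magic then there must exist some vector $x$ in the 0-eigenspace of $\Gamma$ whose multiset of entries is $\{1-n, 3-n, \ldots, n-3, n-1\}$. In particular, since this multiset consists of pairwise distinct integers, every two coordinates of $x$ must be different; that is, $x(k) \ne x(\ell)$ for all $1 \le k < \ell \le n$.

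Next I would use the hypothesis that $B$ is a basis of the 0-eigenspace and that every $b \in B$ satisfies $b(i) = b(j)$. Writing $x = \sum_{b \in B} \lambda_b\, b$ for suitable scalars $\lambda_b$, I would note that
\[
x(i) = \sum_{b \in B} \lambda_b\, b(i) = \sum_{b \in B} \lambda_b\, b(j) = x(j),
\]
so the coordinate equality carries over from the basis to every vector in the eigenspace. This directly contradicts the conclusion of the previous paragraph, hence such an $x$ cannot exist, and a second appeal to Lemma \ref{ev} forces $\Gamma$ to be non-distance magic.

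I do not expect any genuine obstacle here; this is a short, almost immediate corollary whose entire content is the linearity remark above. The only thing worth being careful about is to phrase the argument so that it is clear it applies to \emph{any} basis $B$ (equivalently, to the whole eigenspace), since the hypothesis is stated for a fixed basis but what is really used is that the linear subspace spanned by $B$ contains no vector with all distinct entries.
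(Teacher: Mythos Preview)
Your argument is correct and is exactly the ``immediate corollary'' the paper has in mind: the paper gives no separate proof, simply noting that Corollary~\ref{cor} follows at once from Lemma~\ref{ev}. Your linearity observation---that $b(i)=b(j)$ for all $b\in B$ forces $x(i)=x(j)$ for every $x$ in the $0$-eigenspace, precluding the distinct-entry eigenvector required by Lemma~\ref{ev}---is precisely the intended one-line justification.
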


Using a computer one finds that out of the $906\,331$ connected tetravalent graphs up to order 15 all but 9 graphs are ruled out as candidates for distance magic graphs by Corollary~\ref{cor}. In particular, there are $1,1,1,2,2,2$ candidates of order $6, 8, 10, 12, 14, 15$, respectively. We already know that the 5 wreath graphs $W(n)$, $3 \le n \le 7$, are distance magic which leaves us with two candidates of order 15 and one candidate of each of the orders 12 and 14. For the two graphs of order 15 a computer check reveals that they do not satisfy the conditions of Lemma~\ref{ev} and are thus not distance magic (which is consistent with \cite{KFK}). On the other hand the remaining two graphs of orders 12 and 14 are indeed distance magic. They are presented in Figure \ref{pic:smallgraphs} where a distance magic labeling for each of them is given. In view of the fact that these are the only two connected tetravalent distance magic graphs up to order 15 which are not wreath graphs it makes sense to introduce the following family of graphs and investigate which of its members are distance magic. 
\begin{figure}[!h]
\centering
\includegraphics[scale=0.5]{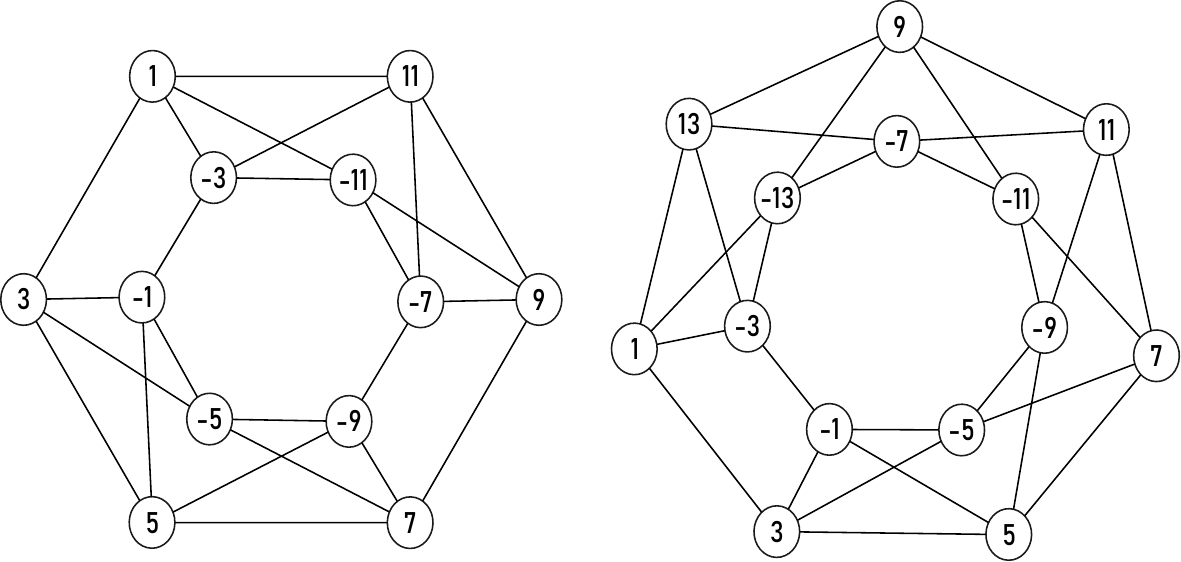}
\caption{The quasi wreath graphs $QW([0,1,1,0,1,1])$ and $QW([0,1,1,1,1,1,1])$.}
\label{pic:smallgraphs}
\end{figure}
\begin{construction} \label{constr:1}
Let $m \ge$ 3 be a positive integer and let $S= [s_0, s_1, \ldots ,s_{m-1}]$ be a sequence such that $s_i \in \{0,1\}$ for all $i \in \mathbb{Z}_m$, $s_0=0$, $s_{m-1}=1$ and for all $i \in \mathbb{Z}_m \setminus \{m-1\}$, at least one of $s_i$, $s_{i+1}$ is equal to 1. Then the {\em quasi wreath graph} $QW(S)$ is the tetravalent graph of order $2m$ with vertex set $\{x_i \colon i \in \mathbb{Z}_m\} \cup \{y_i \colon i \in \mathbb{Z}_m\}$ in which the edges are defined as follows:

\begin{itemize}[itemsep=1pt]
\item for each $i \in \ZZ_m$ we have the edges $\{x_i, x_{i+1}\}$ and $\{y_i, y_{i+1}\}$,
\item for each $i \in \ZZ_m$  with $s_i=0$ we have the edges $\{x_i, y_i\}$ and $\{x_{i+1}, y_{i+1}\}$ and
\item for each $i \in \ZZ_m$  with $s_i=1$ we have the edges $\{x_i, y_{i+1}\}$ and $\{x_{i+1}, y_i\}$,
\end{itemize}
where all the subscripts are computed modulo $m$.
\end{construction}

Note that the wreath graph $W(3)$ is isomorphic to the quasi wreath graph $QW([0,1,1])$ and that the graphs from Figure \ref{pic:smallgraphs} are isomorphic to the quasi wreath graphs $QW([0,1,1,0,1,1])$ and $QW([0,1,1,1,1,1,1])$. In the rest of this paper we abbreviate the term quasi wreath graph to {\em QW-graph}. We also simplify the notation and write $QW(a_1, a_2, \ldots, a_r)$ instead of $QW([\underbrace{0, 1, \ldots, 1}_{a_1}, \underbrace{0, 1, \ldots, 1}_{a_2}, \ldots, \underbrace{0, 1, \ldots, 1}_{a_r}])$. Note that with this notation, $m=a_1 + a_2 + \ldots + a_r$ and the above three graphs are $QW(3)$, $QW(3,3)$ and $QW(7)$, respectively.

As already mentioned, we determine all distance magic QW-graphs in this paper. Before starting this analysis we introduce some additional notation. Let $\Gamma=QW(S)$ be a QW-graph of order $2m$. For each $i \in \mathbb{Z}_m$ we let $B_i=\{x_i, y_i\}$. The sets $B_i$, $i \in \ZZ_m$, are called {\em blocks} of $\Gamma$. Suppose $i \in \ZZ_m$ is such that $s_i=0$ and let $j \ge 1$ be the smallest integer such that $s_{i+j}=0$ (the index computed modulo $m$). Then the subgraph of $\Gamma$ induced on $B_{i+1} \cup B_{i+2} \cup \cdots \cup B_{i+j}$ is called a {\em segment} of $\Gamma$ and is said to be of {\em length} $j$. Note that the number of segments of $\Gamma$ equals the number of $i \in \ZZ_m$ such that $s_i=0$ and that a QW-graph $QW(a_1, a_2, \ldots, a_r)$ has $r$ segments of lengths $a_1, a_2, \ldots, a_r$. 

Finally, for a $QW$-graph $\Gamma= QW(S)$ of order $2m$ and for a labeling $\ell$ of its vertices (not necessarily distance magic) we let $\ell_i=\ell(x_i) + \ell(y_i)$ for each $i \in \mathbb{Z}_m$ and  call it the {\em block label} of $B_i$. We also make an agreement that the indices of $s_i$, $x_i$, $y_i$, $\ell_i$ and $B_i$ are always computed modulo $m$.

\section{A necessary condition} \label{section:}
In this section we give a necessary condition for a QW-graph to be distance magic. First we point out two properties of distance magic labelings of QW-graphs that will be useful later on.

\begin{lemma} \label{lemma:1}
Let $QW(S)$ be a distance magic QW-graph and let $\ell$ be a corresponding distance magic labeling. Then for each $i \in \mathbb{Z}_m$ such that $s_{i}=0$ we have that $\ell_{i+1} \ne -\ell_{i+2}$. 
\end{lemma}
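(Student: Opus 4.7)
The plan is to extract enough local linear equations from the distance magic condition at the four vertices $x_i, y_i, x_{i+1}, y_{i+1}$ to force a contradiction with the bijectivity of $\ell$ whenever $\ell_{i+1}=-\ell_{i+2}$.

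First I would use the structural constraints on $S$: since $s_i=0$ and $s_{m-1}=1$, we must have $i\ne m-1$, and the condition in Construction~\ref{constr:1} together with $s_{m-1}=1$ forces $s_{i-1}=s_{i+1}=1$. Unpacking the three types of edges then gives the explicit neighborhoods
\begin{align*}
N(x_i) &= \{x_{i-1},x_{i+1},y_{i-1},y_i\}, & N(y_i) &= \{y_{i-1},y_{i+1},x_{i-1},x_i\},\\
N(x_{i+1}) &= \{x_i,x_{i+2},y_{i+1},y_{i+2}\}, & N(y_{i+1}) &= \{y_i,y_{i+2},x_{i+1},x_{i+2}\}.
\end{align*}

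Next I would write down the weight-zero condition at each of these four vertices and combine them two at a time. Adding the equations at $x_i$ and $y_i$ yields
\[
2\ell_{i-1}+\ell_i+\ell_{i+1}=0,
\]
while adding the equations at $x_{i+1}$ and $y_{i+1}$ yields
\[
\ell_i+\ell_{i+1}+2\ell_{i+2}=0.
\]
Subtracting the equations at $y_i$ from those at $x_i$ (where the contributions from $x_{i-1}$ and $y_{i-1}$ cancel) produces the additional relation
\[
\ell(x_{i+1})-\ell(y_{i+1}) \;=\; \ell(x_i)-\ell(y_i).
\]

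Finally I would argue by contradiction: assume $\ell_{i+1}=-\ell_{i+2}$. Substituting into the second sum equation gives $\ell_i=-\ell_{i+2}$, so $\ell_i=\ell_{i+1}$. Combined with the difference relation $\ell(x_{i+1})-\ell(y_{i+1})=\ell(x_i)-\ell(y_i)$, adding the two identities yields $\ell(x_i)=\ell(x_{i+1})$, which contradicts the injectivity of $\ell$.

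The main technical step is the third one: once the two sum equations are in hand, the naive hope of a contradiction from $\ell_i=\ell_{i+1}$ alone fails, because distinct blocks can a priori have identical block labels (they are even, while the labels themselves are odd). The point I expect to be the crucial observation is therefore to also exploit the \emph{difference} of the weight equations at $x_i$ and $y_i$, which gives the auxiliary invariant $\ell(x_j)-\ell(y_j)$ and turns the coincidence of block labels into a coincidence of individual vertex labels.
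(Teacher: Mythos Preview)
Your argument is correct and reaches the same contradiction $\ell(x_i)=\ell(x_{i+1})$ as the paper, but the paper gets there in a single step rather than via four weight equations. Once you have written down $N(x_{i+1})=\{x_i,x_{i+2},y_{i+1},y_{i+2}\}$, the weight condition at $x_{i+1}$ alone reads
\[
\ell(x_i)+\ell(y_{i+1})+\ell_{i+2}=0,
\]
while the hypothesis $\ell_{i+1}=-\ell_{i+2}$ is literally
\[
\ell(x_{i+1})+\ell(y_{i+1})+\ell_{i+2}=0.
\]
Subtracting gives $\ell(x_i)=\ell(x_{i+1})$ immediately. Your detour through the weights at $x_i,y_i,y_{i+1}$, the sum relation $\ell_i+\ell_{i+1}+2\ell_{i+2}=0$, and the difference invariant $\ell(x_{i+1})-\ell(y_{i+1})=\ell(x_i)-\ell(y_i)$ is valid but unnecessary: the ``crucial observation'' you flag in your final paragraph is not actually needed, because comparing the single weight equation at $x_{i+1}$ directly against the hypothesis already pins down an individual vertex label, not merely a block label. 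In particular, you never needed $s_{i-1}=1$ or the neighborhoods of $x_i$ and $y_i$ at all.
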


\begin{proof}
By way of contradiction, suppose that for some $i \in \ZZ_m$ we have that $s_{i}=0$ and $\ell_{i+1}=-\ell_{i+2}$, that is, $\ell(x_{i+1}) + \ell(y_{i+1})+\ell_{i+2}=0$. Considering the neighbors of $x_{i+1}$ we have that $\ell(x_i) + \ell(y_{i+1})+\ell_{i+2}=0$. This implies that $\ell(x_i)=\ell(x_{i+1})$, a contradiction. 
\end{proof}

\begin{lemma} \label{lemma:f}
Let $QW(S)$ be a distance magic QW-graph and let $\ell$ be a corresponding distance magic labeling. Then for each $i \in \ZZ_m$ the following holds:
\begin{equation}
\begin{split}
\ell_{i+2}
& = \left\{ 
\begin{array}{lll}
-\ell_i \colon & s_i=s_{i+1}=1\\
-\frac{1}{2}(\ell_i+\ell_{i+1}) \colon & s_i=0, \quad s_{i+1}=1 \\
-2\ell_i-\ell_{i+1} \colon & s_i= 1, \quad s_{i+1}=0.
\end{array} \right. \\
\end{split}
\end{equation}
\end{lemma}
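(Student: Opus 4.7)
The plan is to identify the neighbors of the two vertices in block $B_{i+1}$ in each of the three cases and write down the corresponding weight-zero equations; summing these (in a way tailored to each case) will isolate $\ell_{i+2}$ as a linear combination of $\ell_i$ and $\ell_{i+1}$. The identification of neighbors is immediate from Construction \ref{constr:1}: the vertex $x_{j}$ is always adjacent to $x_{j-1}$ and $x_{j+1}$, and its two remaining neighbors are determined by the values $s_{j-1}$ and $s_{j}$ (a $0$ contributes $y_{j}$, a $1$ contributes $y_{j\mp 1}$ depending on whether it is $s_j$ or $s_{j-1}$); the analogous statement holds for $y_j$.

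For the case $s_i = s_{i+1} = 1$, I would observe that the neighborhood of $x_{i+1}$ is $\{x_i, x_{i+2}, y_i, y_{i+2}\}$, so the weight-zero equation reads $\ell(x_i)+\ell(y_i)+\ell(x_{i+2})+\ell(y_{i+2})=0$, which is exactly $\ell_i+\ell_{i+2}=0$. For the case $s_i = 0$, $s_{i+1}=1$, the neighborhoods become $N(x_{i+1})=\{x_i,x_{i+2},y_{i+1},y_{i+2}\}$ and $N(y_{i+1})=\{y_i,y_{i+2},x_{i+1},x_{i+2}\}$; adding the two weight-zero equations makes the vertices of $B_{i+1}$ each appear once and those of $B_{i+2}$ each appear twice, producing $\ell_i+\ell_{i+1}+2\ell_{i+2}=0$, which is the claimed identity.

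For the case $s_i = 1$, $s_{i+1}=0$, the neighborhoods are $N(x_{i+1})=\{x_i,x_{i+2},y_i,y_{i+1}\}$ and $N(y_{i+1})=\{y_i,y_{i+2},x_i,x_{i+1}\}$; adding the two weight-zero equations yields $2\ell_i+\ell_{i+1}+\ell_{i+2}=0$, giving the stated formula. Note that the hypothesis $s_0=0$, $s_{m-1}=1$ together with the no-two-consecutive-zeros restriction only influences whether the three cases are exhaustive for every $i$, which is not needed here since the lemma is only asserted for those $i$ where the hypothesis on $(s_i, s_{i+1})$ holds.

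There is no real obstacle to this proof; the only thing to be careful about is the bookkeeping of the adjacencies in terms of the parameters $s_i$ and $s_{i+1}$, and to keep in mind that all subscripts are computed modulo $m$ as per our convention.
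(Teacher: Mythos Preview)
Your proof is correct and follows essentially the same approach as the paper's: in each case you write the weight-zero equations for the vertices of $B_{i+1}$ and add them to obtain the claimed relation, which is exactly what the paper does (it leaves the third case to the reader, whereas you spell it out). The neighborhood computations are accurate.
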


\begin{proof}
The case of $s_i=s_{i+1}=1$ is clear (simply considering the neighbors of $x_{i+1}$). Suppose next that $s_i=0$ and $s_{i+1}=1$. Considering the neighbors of $x_{i+1}$ and $y_{i+1}$ we have that $\ell(x_i)+\ell(y_{i+1})+\ell_{i+2}=0=\ell(y_i)+\ell(x_{i+1})+\ell_{i+2}$. This yields $\ell_{i}+\ell_{i+1}+2\ell_{i+2}=0$, and therefore $\ell_{i+2}= -\frac{1}{2}(\ell_i+\ell_{i+1})$. The case of $s_i=1$, $s_{i+1}=0$ is analogous and is left to the reader.
\end{proof}

\begin{proposition} \label{proposition:necessary}
Let $\Gamma=QW(S)$ be a distance magic QW-graph and let $\ell$ be a corresponding distance magic labeling. Then all of the segments of $\Gamma$ are of odd length. Moreover, the number of segments whose length is congruent to 1 modulo 4 is even. 
\end{proposition}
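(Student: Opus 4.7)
My plan is to convert the cyclic closure of the block-label recurrence into a single equation about a product of $2 \times 2$ matrices, and then exploit entry-wise positivity of these matrices to pin down all of the structure.

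First I would iterate Lemma~\ref{lemma:f} within a segment. Since $\ell_{i+2} = -\ell_i$ holds whenever $s_i = s_{i+1} = 1$, combining this with the segment-start identity $\ell_{i_t+2} = -\tfrac12(\alpha_t+\beta_t)$ (where $(\alpha_t,\beta_t) := (\ell_{i_t}, \ell_{i_t+1})$ and $s_{i_t} = 0$) gives the closed form
\[
\ell_{i_t+2k-1} = (-1)^{k-1}\beta_t, \qquad \ell_{i_t+2k} = \tfrac{(-1)^k}{2}(\alpha_t+\beta_t).
\]
Applying the third case of Lemma~\ref{lemma:f} at the end of the segment turns $(\alpha_{t+1},\beta_{t+1})$ into an explicit linear function of $(\alpha_t,\beta_t)$, and a routine check of the four cases according to $a_t \bmod 4$ shows that, in the coordinates $p := \alpha+\beta$ and $q := \beta-\alpha$, the segment map is respectively $+T_0, -T_3, -T_0, +T_3$ for $a_t \equiv 0,1,2,3 \pmod 4$, where
\[
T_0 = \begin{pmatrix} 1 & 1 \\ 0 & 1 \end{pmatrix}, \qquad T_3 = \begin{pmatrix} 1 & 0 \\ 2 & 1 \end{pmatrix}.
\]

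The cyclic closure $(\alpha_{r+1},\beta_{r+1}) = (\alpha_1,\beta_1)$ thus becomes $\epsilon M' (p_1,q_1)^T = (p_1,q_1)^T$, where $\epsilon = \prod_t \epsilon_t \in \{\pm 1\}$ collects the segment signs (with $\epsilon_t = -1$ iff $a_t \equiv 1$ or $2 \pmod 4$) and $M'$ is a nontrivial positive word of length $r$ in $T_0$ and $T_3$. Lemma~\ref{lemma:1} further gives $q_1 = \beta_1 - \alpha_1 \neq 0$.

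The main step --- and the main obstacle --- is a rigidity fact for positive products of $T_0, T_3$. A short induction on word length shows that every such $M' = \bigl(\begin{smallmatrix} a & b \\ c & d \end{smallmatrix}\bigr)$ has $a, b, c, d \ge 0$ with $a, d \ge 1$. Together with $\det M' = 1$, the trace condition $\operatorname{tr} M' = \pm 2$ (needed for $\pm 1$ to be an eigenvalue) forces $a = d = 1$ and $bc = 0$, so $M' \in \{T_0^k, T_3^m\}$; in particular $\operatorname{tr} M' = -2$ cannot occur, hence $\epsilon = +1$. The option $M' = T_0^k$ is ruled out because its fixed vectors have $q = 0$, contradicting Lemma~\ref{lemma:1}. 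Therefore $M' = T_3^m$ with every factor equal to $T_3$, meaning every segment has odd length. A direct sign count then gives $\epsilon = (-1)^{r_1}$, with $r_1$ the number of segments of length $\equiv 1 \pmod 4$, and the equation $\epsilon = +1$ is exactly the required parity statement.
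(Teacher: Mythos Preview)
Your argument is correct and takes a genuinely different route from the paper's. One small step is underjustified: from $a=d=1$ and $bc=0$ you jump to ``$M'\in\{T_0^k,T_3^m\}$'' as a \emph{word}, not just as a matrix. The fix is immediate: strengthen your induction to show that any word in $T_0,T_3$ containing at least one $T_0$ has upper-right entry $\ge 1$, and symmetrically for $T_3$ and the lower-left entry. Then $bc=0$ forces the word to consist of a single letter repeated, which is what you actually use in the last paragraph. With this patched, every step checks out (including the translation of Lemma~\ref{lemma:1} into $q_1\neq 0$, the segment-map computation in the $(p,q)$ coordinates, and the sign bookkeeping $\epsilon=(-1)^{r_1}$).

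The paper argues extremally instead: it picks a block $B_k$ with $|\ell_k|$ maximal, writes the block labels of the surrounding segment in terms of its first two values $a,b$, computes the neighboring block labels $\ell_{k-i}$ and $\ell_{k+j+1}$ just outside that segment, and uses $|\ell_k|\ge\max(|\ell_{k-i}|,|\ell_{k+j+1}|)$ together with Lemma~\ref{lemma:1} to force even length to be impossible and, for odd length, to force $b=0$. The maximum then propagates to the adjacent segment, and one finishes with the global condition $\sum_v\ell(v)=0$. Your approach encodes exactly the same recurrences as a product of $2\times 2$ matrices and replaces the maximality argument by a positivity/trace rigidity in $SL_2(\ZZ)$; this is cleaner algebraically, avoids picking a distinguished block, and would port well to related recurrences. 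The paper's argument is more hands-on and yields the explicit block-label pattern ($\ldots,-a,0,a,0,-a,\ldots$) along the way, which they reuse implicitly in the sufficiency proof.
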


\begin{proof}
Let $k \in \ZZ_m$ be such that $|\ell_k| \ge |\ell_t|$ for all $t \in \ZZ_m$. Note that Lemma \ref{lemma:1} implies that $\ell_k \ne 0$. Let $i,j$ be the smallest integers such that $i \ge 1$, $j \ge 0$ and $s_{k-i}=s_{k+j}=0$. Therefore, the subgraph induced on $B_{k-i+1} \cup B_{k-i+2} \cup \cdots \cup B_{k+j}$ is a segment (of length $i+j$) containing the block $B_k$ having the maximum absolute value of a block label among all blocks of $\Gamma$. Let $a = \ell_{k-i+1}$ and $b = \ell_{k-i+2}$. Since $s_{k-i+r}=1$ for all $r$ with $1 \le r < i+j$, Lemma \ref{lemma:f} implies that $\ell_{k-i} = -a-2b$ and that
\begin{equation} \label{eq:r}
\begin{split}
\ell_{k-i+r}
& = \left\{ 
\renewcommand{\arraystretch}{1.2}
\begin{array}{lll}
-b \colon & \modulo{r}{0}{4}\\
a \colon & \modulo{r}{1}{4}\\
b \colon & \modulo{r}{2}{4}\\
-a \colon & \modulo{r}{3}{4}
\end{array} \right. \\
\end{split}
\end{equation}
for all $r$ with $1 \le r \le i+j$ (see Figure \ref{fig:proof}). Moreover,
\begin{equation} \label{eq:first}
\begin{split}
\ell_{k+j+1}
& = \left\{ 
\renewcommand{\arraystretch}{1.2}
\begin{array}{lll}
2a+b \colon & \modulo{i+j}{0}{4}\\
2b-a \colon & \modulo{i+j}{1}{4} \\
-2a-b \colon & \modulo{i+j}{2}{4} \\
-2b+a \colon & \modulo{i+j}{3}{4}. 
\end{array} \right. \\
\end{split}
\end{equation}
\begin{figure}[!h] 
\centering
\includegraphics[scale=0.99]{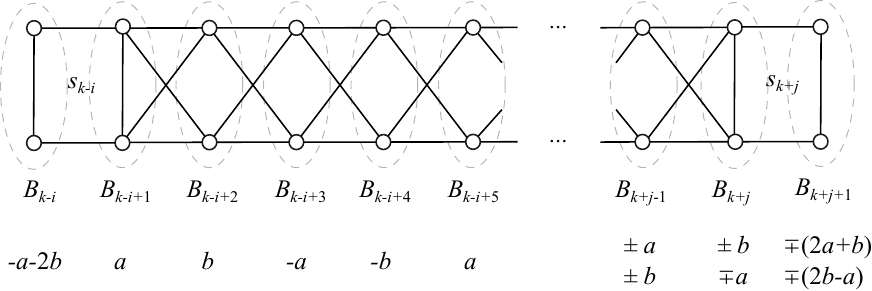}
\caption{Block labels of $QW(S)$.}
\label{fig:proof}
\end{figure}

\noindent
Suppose $i+j$ is even. Then (\ref{eq:first}) implies that $|\ell_{k+j+1}|=|2a+b|$, and so 
$$ \max \left\{|a|,|b|\right\} = |\ell_k| \ge \max\left\{ |\ell_{k-i}|, |\ell_{k+j+1}|\right\} = \max\left\{|a+2b|,|2a+b|\right\}$$
implies that $b=-a$, contradicting Lemma \ref{lemma:1}. Therefore, $i+j$ is odd, and so (\ref{eq:first}) implies that $|\ell_{k+j+1}|= |2b-a|$. It follows that
\begin{equation*}
\max \left\{|a|,|b|\right\} = |\ell_k| \ge \max\left\{ |\ell_{k-i}|, |\ell_{k+j+1}|\right\} = \max\left\{|2b+a|,|2b-a|\right\}.
\end{equation*}
Therefore $|a|>|b|$, and consequently $b=0$. Moreover, (\ref{eq:r}) and (\ref{eq:first}) imply that if $\modulo{i+j}{1}{4}$ then $\ell_{k+j}=a$ and $\ell_{k+j+1}=-a$, while if $\modulo{i+j}{3}{4}$ then $\ell_{k+j}=-a$ and $\ell_{k+j+1}=a$. Note that this implies that the sum of labels of all vertices of this segment equals 0 if $\modulo{i+j}{3}{4}$ and equals $a$ otherwise. Moreover, (\ref{eq:first}) implies that the next segment also contains a block having the maximum absolute value of a block label among all blocks of $\Gamma$, and so an inductive approach shows that all segments are of odd length, as claimed. Since $\Gamma$ is distance magic, the sum of labels of all vertices of the graph is $0$, showing that the number of segments whose length is congruent to 1 modulo 4 is even.
\end{proof}

\section{A sufficient condition}
In this section we show that the condition from Proposition \ref{proposition:necessary} is in fact sufficient for a QW-graph to be distance magic. We first introduce the following two terms. Let $\Gamma=QW(S)$ be a QW-graph. A segment of $\Gamma$ is said to be of {\em type A} if its length is congruent to 3 modulo 4, and is said to be of {\em type B} if its length is congruent to 1 modulo 4. The next result shows that if $\Gamma$ satisfies the condition of Proposition \ref{proposition:sufficient}, then it is distance magic. This is done by exhibiting a particular distance magic labeling for such graphs. To simplify the notation when working with pairs of integers we make the convention of writing the pair  $\left<-x,-y\right>$ as $(-1)\left<x,y\right>$ and the pair $\left<x+x', y+y'\right>$ as $\left<x,y \right> + \left<x',y'\right>$.

\begin{proposition} \label{proposition:sufficient}
Let $\Gamma=QW(S)$ be a QW-graph. If $\Gamma$ only has segments of types A and B and has an even number of segments of type B, then $\Gamma$ is distance magic.
\end{proposition}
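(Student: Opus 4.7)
My plan is to prove Proposition~\ref{proposition:sufficient} by exhibiting an explicit distance magic labeling of $\Gamma=QW(S)$. Let $Z$ be the number of zero blocks (blocks at even positions within a segment, in the sense of the proof of Proposition~\ref{proposition:necessary}) and $E=m-Z$ the number of extreme blocks. Since a type~A segment of length $4k_i+3$ has $2k_i+2$ extreme blocks and a type~B segment of length $4k_j+1$ has $2k_j+1$, one has $E\equiv r_B\pmod 2$, where $r_B$ is the number of type~B segments; by hypothesis $r_B$ is even, so $E$ is even as well. I will use the scale $a=2m+2Z$: the outer-inner pairing $(2Z+2k-1,\,2m-2k+1)$, $k=1,\ldots,E/2$, partitions the positive ``extreme'' labels $\{2Z+1,2Z+3,\ldots,2m-1\}$ into exactly $E/2$ pairs, each summing to $a$.

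Following the proof of Proposition~\ref{proposition:necessary}, I set the block labels within each segment to alternate between $\pm a$ and $0$, with the overall sign reversed across each type~B boundary; consistency around the cycle uses that $r_B$ is even. The $E$ extreme blocks are then grouped into $E/2$ \emph{partner pairs} of the form $(+a,-a)$ as follows: (i) at each $s_i=0$ boundary the two adjacent boundary blocks form a \emph{link pair}; (ii) within a type~A segment of length $4k+3$ the $2k$ inner extreme blocks form $k$ consecutive pairs $(3,5),(7,9),\ldots$; (iii) within a type~B segment of length $4k+1$ the $2k-1$ inner extreme blocks form $k-1$ consecutive pairs, leaving one \emph{leftover} block at position $L-2$; and (iv) the $r_B$ leftovers are paired off (consecutively along the cycle, which works because $r_B$ is even) into $r_B/2$ \emph{cross-pairs}. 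A direct count confirms that this exhausts all extreme blocks and yields exactly $E/2$ partner pairs.

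To produce the labeling I assign to each zero block an opposite pair $\{c,-c\}$ from $\{\pm1,\pm3,\ldots,\pm(2Z-1)\}$, and to each partner pair one of the $E/2$ label quadruples $\{p,q,-p,-q\}$ with $p+q=a$. For a link pair $(B_i,B_{i+1})$ the boundary weight equations from Proposition~\ref{proposition:necessary} force $\langle\ell(x_{i+1}),\ell(y_{i+1})\rangle=-\langle\ell(y_i),\ell(x_i)\rangle$, so I set $\langle p,q\rangle$ in the $+a$ block of the link pair and $\langle -q,-p\rangle$ in the $-a$ block; for all other partner pairs and for each zero block the arrangement of individual labels within a block is free. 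That every vertex has weight $0$ is then checked by a brief case analysis: an inner vertex has weight $\ell_{i-1}+\ell_{i+1}$, which vanishes by the $\pm a,0$ alternation, and a vertex in a boundary block has weight of the form $\ell(x_i)+\ell(y_{i+1})$ or $\ell(y_i)+\ell(x_{i+1})$, which vanishes by the link constraint.

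The main obstacle is step~(iv) together with the sign-consistency of the alternating block-label pattern around the cyclic graph; both rely on the hypothesis that $r_B$ is even, which is one half of the assumption. The other half, that every segment has odd length, is what makes the $\pm a,0,\mp a,0,\ldots$ pattern within each segment well-defined in the first place, and a final piece of bookkeeping is to verify that the chosen scale $a=2m+2Z$ is exactly the value making the outer-inner pairing produce $E/2$ pairs summing to $a$ while using each element of $\{\pm 1,\pm 3,\ldots,\pm(2m-1)\}$ precisely once.
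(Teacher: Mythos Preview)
Your argument is correct and takes a genuinely different route from the paper's.

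The paper builds one explicit labeling whose block labels are $\pm 2$ and $0$: to each subgraph $\Gamma_i$ (the $i$-th segment together with the preceding boundary block) it assigns, via closed formulas, exactly the labels of absolute value in the interval $[2k_i+1,\,2k_{i+1}-1]$, with a swap of two blocks between each consecutive pair of type-B segments. Verifying bijectivity and the vanishing of all weights then requires a lengthy case analysis (Claims~1 and~2 in the paper).

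Your construction instead takes block labels $\pm a$ with $a=2m+2Z$, the \emph{largest} value compatible with the label set. This choice forces both vertex labels in any $+a$ block to lie in $\{2Z+1,\ldots,2m-1\}$ (and symmetrically for $-a$ blocks), so the labels split cleanly by block type: the $2Z$ small labels $\{\pm 1,\ldots,\pm(2Z-1)\}$ go bijectively to the $Z$ zero blocks, and the $2E$ large labels go to the $E$ extreme blocks via the $E/2$ quadruples $\{p,q,-p,-q\}$ with $p+q=a$. Bijectivity is then immediate, and the weight check reduces to the single link constraint $\langle\ell(x_{i+1}),\ell(y_{i+1})\rangle=-\langle\ell(y_i),\ell(x_i)\rangle$ at each $s_i=0$ together with $\ell_{j-1}+\ell_{j+1}=0$ for all inner blocks. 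As a bonus you obtain a large family of labelings (the assignment of quadruples to partner pairs, and of $\pm c$ pairs to zero blocks, is free), whereas the paper produces one specific labeling.

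One small point to tighten when writing this up: your stated weight at a boundary vertex, ``$\ell(x_i)+\ell(y_{i+1})$ or $\ell(y_i)+\ell(x_{i+1})$'', omits the contribution $\ell_{i-1}$ (for vertices in the last block $B_i$ of a segment) or $\ell_{i+2}$ (for vertices in the first block $B_{i+1}$ of the next). You should note explicitly that $B_{i-1}$ and $B_{i+2}$ are at even positions in their respective segments and hence are zero blocks, so these extra terms vanish. With that remark added, the sketch is complete.
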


\begin{proof}
Suppose all segments of $\Gamma$ are of types A and B and there is an even number of segments of type B.  Let $m = |S|$ and let $t$ be the number of segments of $\Gamma$. By definition there exist integers $k_1, k_2, \ldots, k_t$, where $0=k_1<k_2<k_3<\cdots <k_t<m-1$, such that for each $j$ with $0 \le j < m$, we have that $s_j=0$ if and only if $j \in \{k_1, k_2, \ldots, k_t\}$. \\
For each $i$ with $1 \le i \le t$ we call the segment containing all the blocks $B_j$ with $k_i+1 \le j \le k_{i+1}$ the {\em i-th segment} of $\Gamma$ (with the understanding that $k_{t+1}=m$) and we let $b_i$ be the number of segments of type B preceding the $i$-th segment where by definition $b_1=0$. Furthermore, we let $\Gamma_i$ be the subgraph of $\Gamma$ induced on $B_{k_i}\cup B_{k_i+1} \cup \cdots \cup B_{k_{i+1}-1}$ (see Figure \ref{fig:proof1}). 

We now assign labels to the vertices of $\Gamma$ as follows (see Figure \ref{pic:labeling} for an example of such a labeling). For each $i$ with $1 \le i \le t$ we first let

\begin{equation} \label{eq:j=0}
\langle \ell(x_{k_i}), \ell(y_{k_i}) \rangle = (-1)^{b_i}(\langle 2k_i, -2k_i \rangle + \langle1, -3 \rangle),
\end{equation}
\begin{equation} \label{eq:j=1}
\langle \ell(x_{k_i+ 1}), \ell(y_{k_i+ 1}) \rangle = (-1)^{b_i} (\langle 2k_i, -2k_i \rangle + \langle 3, -1\rangle),
\end{equation} 
and 
\begin{equation} \label{eq:j} 
\langle \ell(x_{k_i+ j}), \ell(y_{k_i+ j}) \rangle = (-1)^{b_i} (\langle 2k_i, -2k_i \rangle + \langle \alpha, -\beta \rangle),
\end{equation}
for each $j$ with $2 \le j \le k_{i+1} - k_i - 3$, where
\begin{equation*} \label{eq:6a}
\begin{split}
\left< \alpha, \beta \right> & =\left\{
\renewcommand{\arraystretch}{1.2}
\begin{array}{lll}
\langle 2j+3,\; 2j+3 \rangle \colon & \modulo{j}{0}{4}\\
\langle 2j-1,\; 2j-3 \rangle \colon & \modulo{j}{1}{4} \\
\langle 2j+1,\; 2j+1 \rangle \colon & \modulo{j}{2}{4}\\
\langle 2j+1,\; 2j+3 \rangle \colon & \modulo{j}{3}{4}.
\end{array} \right.
\end{split} \tag{6*}
\end{equation*}
In addition, for the $i$ with $1 \le i \le t$ for which the $i$-th segment is of type B and $b_i$ is even, we let $i'>i$ be the smallest positive integer such that the $i'$-th segment is also of type B, and we let
\begin{equation} \label{eq:7}
\langle \ell(x_{k_{i'+1}-2}), \ell(y_{k_{i'+1}-2}) \rangle =  \left<2k_{i+1}, -2k_{i+1}\right> + \left< -1, 3\right>,
\end{equation}
\begin{equation} \label{eq:8}
\langle \ell(x_{k_{i+1}-2}), \ell(y_{k_{i+1}-2}) \rangle =  \left<2k_{i+1}, -2k_{i+1}\right> + \left< -3, 1\right>, 
\end{equation} and
\begin{equation} \label{eq:9}
\langle \ell(x_{k_{i+1}-1}), \ell(y_{k_{i+1}-1}) \rangle =\left< 2k_{i'+1}, -2k_{i'+1} \right> + \left< -3, 3 \right>.
\end{equation}
Note that with (\ref{eq:7}), (\ref{eq:8}) and (\ref{eq:9}) we labeled four vertices of the $i$-th segment and two vertices of the $i'$-th segment. \\
Next, for the $i$ with $1 \le i \le t$ for which the $i$-th segment is of type A and of length greater than three we let
\begin{equation} \label{eq:10}
\langle \ell(x_{k_{i+1}-2}), \ell(y_{k_{i+1}-2}) \rangle = (-1)^{b_i} (\left< 2k_{i+1}, -2k_{i+1}  \right> + \left< -5, 7 \right>).
\end{equation}
Finally, for the $i$ with $1 \le i \le t$ for which the $i$-th segment is of type B with $b_i$ odd or of type A we let
\begin{equation} \label{eq:11}
\langle \ell(x_{k_{i+1}-1}), \ell(y_{k_{i+1}-1}) \rangle = \left< 2k_{i+1}, -2k_{i+1} \right> + \left< -1, 1 \right>.
\end{equation} 

\begin{figure}[!h] 
\centering
\includegraphics[scale=0.99]{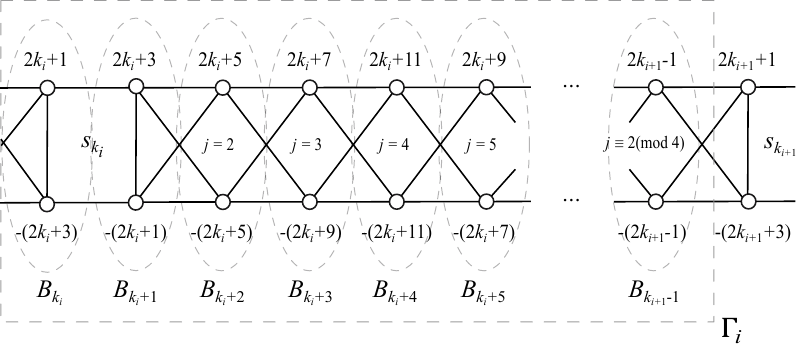}
\caption{Labeling of segments of type A (of length greater than 3) preceded by an even number of segments of type B.}
\label{fig:proof1}
\end{figure}

In order to prove that $\ell$ is a distance magic labeling of $\Gamma$ we proceed by proving two claims. \\

\noindent
\textsc{Claim 1}: The weight of each vertex of $\Gamma$ is equal to 0.

\noindent
Let $i \in \{1, \ldots, t\}$. Considering the labels of vertices of any subgraph $\Gamma_i$, for each $j$ with $2 \le j \le k_{i+1}-k_i-1$ we have that $N(x_{k_i+j})=N(y_{k_i+j})$. To see that these two vertices have their weight equal to 0 it is thus sufficient to consider the block labels $\ell_{k_i+j-1}$ and $\ell_{k_i+j+1}$. Note that the block label of each block in $\Gamma_i$ is $0$, $-2,$ or $2$. Namely, (\ref{eq:j=0}) and (\ref{eq:j=1}) imply that 
\begin{equation} \label{eq:asdfg}
\ell_{k_i}=-2(-1)^{b_i} \quad \textrm{and} \quad \ell_{k_{i+1}}=2(-1)^{b_i},
\end{equation}
and for each $j$ with $2 \le j \le k_{i+1}-k_i-3$, we have by (\ref{eq:j}) and (\ref{eq:6a}) that
\begin{equation} \label{eq:12}
\begin{split}
\ell_{k_i+j} & = \left\{
\renewcommand{\arraystretch}{1.2}
\begin{array}{lll}
0 \colon & \modulo{j}{0}{2}\\
2(-1)^{b_i}\colon & \modulo{j}{1}{4} \\
-2(-1)^{b_i} \colon & \modulo{j}{3}{4}.
\end{array} \right. \\
\end{split}
\end{equation}
Additionally, (\ref{eq:9}) and (\ref{eq:11}) imply that 
\begin{equation} \label{eq:added}
\ell_{k_{i+1}-1}=0
\end{equation}
and finally for segments of length greater than three (\ref{eq:7}), (\ref{eq:8}) and (\ref{eq:10}) imply that
\begin{equation} \label{eq:13}
\begin{split}
\ell_{k_{i+1}-2} & = \left\{
\renewcommand{\arraystretch}{1.2}
\begin{array}{lll}
2 (-1)^{b_i} \colon & \textrm{the } i\textrm{-th segment is of type A}\\
-2 (-1)^{b_i}\colon & \textrm{the } i\textrm{-th segment is of type B}.
\end{array} \right.
\end{split}
\end{equation}
We now verify that for each $j$ with $0 \le j \le k_{i+1}-k_i-1$ we indeed have that $w(x_{k_i+j})=w(y_{k_i+j}) = 0$. For $j$ with $3 \le j \le k_{i+1}-k_i-4$ this follows from (\ref{eq:12}). \\
Suppose that $j \in \{0,1\}$. By (\ref{eq:j=0}) and (\ref{eq:j=1}) we get that $\ell(x_{k_i})+\ell(y_{k_i+1})=0=\ell(y_{k_i})+\ell(x_{k_i+1})$. The first two vertices belong to $N(x_{k_i+1})$ and $N(y_{k_i})$, while the second two belong to $N(x_{k_i})$ and $N(y_{k_i+1}).$ Moreover, by (\ref{eq:12}) and (\ref{eq:added}) we have that $\ell_{k_i-1}=\ell_{k_i+2}=0$ which means that the vertices $x_{k_i}, y_{k_i}, x_{k_i+1}$ and $y_{k_i+1}$ all have weight 0. \\
Suppose that the $i$-th segment is of length three. Then $b_{i+1}=b_i$ (unless $i=t$ in which case $b_i$ is even), and so (\ref{eq:asdfg}) implies that $w(x_{k_i+2})=w(y_{k_i+2}) = 0$ which in turn implies that all vertices of $\Gamma_i$ have weight 0. From now on we thus assume that the $i$-th segment is of length at least 5, that is $k_{i+1}-k_i \ge 5$  (where we set $k_{t+1}=m$ if $i=t$). \\
For $j=k_{i+1}-k_i-3$ we consider two cases. If the $i$-th segment is of type A, then $\modulo{j}{0}{4}$ and $j \ge 4$. Hence, (\ref{eq:12}) and (\ref{eq:13}) imply that $\ell_{k_i+j-1}= -2(-1)^{b_i}$ and $\ell_{k_i+j+1}=2(-1)^{b_i}$. Otherwise, $\modulo{j}{2}{4}$  (recall that the segments are of odd length) and by (\ref{eq:13}), $\ell_{k_i+j+1}=-2(-1)^{b_i}$. Moreover, by (\ref{eq:asdfg}) or (\ref{eq:12}) (depending on whether the $i$-th segment is of length five or more, respectively) we have that $\ell_{k_i+j-1}= 2(-1)^{b_i}$. In both cases, the corresponding sum is 0. \\
To complete the case $j=2$ it suffices to consider the segments of length greater then five. In that case (\ref{eq:asdfg}) and (\ref{eq:12}) imply that  $w(x_{k_i+2})=w(y_{k_i+2}) = 0$. \\
Similarly, for $j=k_{i+1}-k_i-2$, (\ref{eq:12}) and (\ref{eq:added}) imply that $\ell_{k_i+j-1}=\ell_{k_i+j+1}=0$. \\
Finally, let  $j=k_{i+1}-k_i-1$. If the $i$-th segment is of type A, then $b_{i+1}=b_i$ (unless $i=t$ in which case $b_i$ is even), and so (\ref{eq:asdfg}) implies that $\ell_{k_{i+1}}= -2(-1)^{b_i}$, while (\ref{eq:13}) implies that $\ell_{k_{i+1}-2}=2(-1)^{b_i}$. On the other hand, if the $i$-th segment is of type B, then $b_{i+1}=b_i+1$ (unless $i=t$ in which case $b_i$ is odd), and so (\ref{eq:asdfg}) implies that $\ell_{k_{i+1}}= 2(-1)^{b_i}$, while (\ref{eq:13}) implies that $\ell_{k_{i+1}-2}=-2(-1)^{b_i}$. In both cases, the corresponding sum is 0. This finally proves Claim 1.\\

\begin{figure}[!h]
\centering
\includegraphics[scale=0.65]{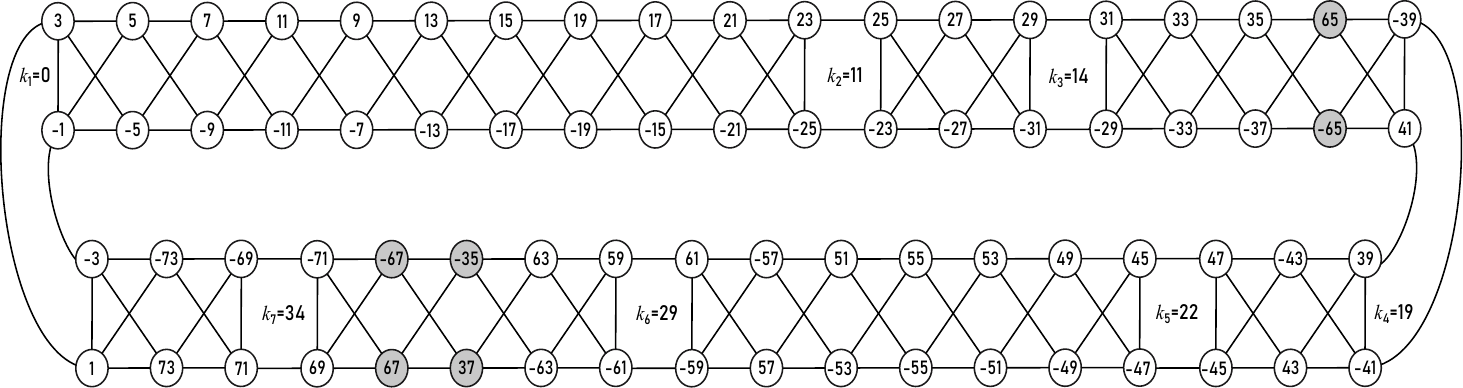}
\caption{A distance magic labeling of $QW(11,3,5,3,7,5,3)$ $(m=37)$ consisting of 5 segments of type A and 2 segments of type B, as defined in (\ref{eq:j=0})-(\ref{eq:11}).}
\label{pic:labeling}
\end{figure}

We now prove that the described labeling $\ell$ is a bijective mapping from $V(\Gamma)$ to $\{1-n, 3-n, \ldots, n-1\}$, where $n=2m$. 
It is clear that we lose nothing by exchanging a few labels from one segment with a few labels from another segment. To simplify this part of the proof, we do just that by letting $\tilde{\ell}$ be the labeling obtained from $\ell$ as follows. For each $i$ with $1 \le i \le t$ such that the $i$-th segment is of type B with $b_i$ even, let $i'>i$ be the smallest positive integer such that the $i'$-th segment is also of type B. Then $\tilde{\ell}$ on these two segments is defined precisely as $\ell$, except on the blocks $B_{k_{i+1}-1}$ and $B_{k_{i'+1}-2}$, for which we exchange the labels. In particular, instead of (\ref{eq:7}) and (\ref{eq:9}) we let
\begin{equation} \label{eq:7a}
\langle \tilde{\ell}(x_{k_{i'+1}-2}), \tilde{\ell}(y_{k_{i'+1}-2}) \rangle = \left< 2k_{i'+1}, -2k_{i'+1} \right> + \left< -3, 3 \right>, \tag{7*}
\end{equation}
\begin{equation} \label{eq:9a}
\langle \tilde{\ell}(x_{k_{i+1}-1}), \tilde{\ell}(y_{k_{i+1}-1}) \rangle = \left<2k_{i+1}, -2k_{i+1}\right> + \left< -1, 3\right>. \tag{9*}
\end{equation}

\noindent
\textsc{Claim 2}: For each $i \in \{1, \ldots, t\}$ the labeling $\tilde{\ell}$ maps $V(\Gamma_i)$ bijectively to $$\{1-2k_{i+1}, 3-2k_{i+1},\ldots, -2k_i-1\} \cup \{2k_i+1, 2k_i+3, \ldots, 2k_{i+1}-1\}.$$
Note that by definition of $\tilde{\ell}$ it is clear that for each $i$ the smallest absolute value of labels of vertices of $\Gamma_i$ is $2k_i+1$ and belongs to the vertices $x_{k_i}$ and $y_{k_i+1}$. \\
Suppose first that the $i$-th segment is of length 3. Then (\ref{eq:j=0}), (\ref{eq:j=1}) and (\ref{eq:11}) imply that Claim 2 clearly holds for $\Gamma_i$. Suppose now that the $i$-th segment is of length at least 5.
Computation of $\left< \alpha, \beta \right>$ in (\ref{eq:6a}) modulo 8 yields
\begin{equation*} \label{eq:16}
\begin{split}
\left< \alpha, \beta \right>
& \equiv\left\{ 
\renewcommand{\arraystretch}{1.2}
\begin{array}{lll}
\left< \modul{3}{8}, \; \modul{3}{8} \right> \colon & \modulo{j}{0}{4}\\
\left< \modul{1}{8}, \; \modul{7}{8} \right> \colon & \modulo{j}{1}{4}\\
\left< \modul{5}{8}, \; \modul{5}{8} \right> \colon & \modulo{j}{2}{4}\\
\left< \modul{7}{8}, \; \modul{1}{8} \right> \colon & \modulo{j}{3}{4}, 
\end{array} \right. \\
\end{split}
\end{equation*}
where $2\le j \le k_{i+1} - k_i - 3$. Thus, the labels of the vertices from each of the sets $\{x_{k_i+j} \colon 2 \le j \le k_{i+1} - k_i - 3\}$ and $\{y_{k_i+j} \colon 2 \le j \le k_{i+1} - k_i - 3\}$  are pairwise distinct. Moreover, since the labels of the vertices from one set are positive while those of the other are negative, all of these labels are pairwise distinct. Furthermore, since we require in (\ref{eq:j}) that $j \ge 2$, we have by (\ref{eq:6a}) that $\alpha, \beta \ge 5$, and so for each $i$ the absolute value of each label from (\ref{eq:j}) is greater than the absolute value of any label from (\ref{eq:j=0}) and (\ref{eq:j=1}). Hence, since $k_{i+1}-k_i$ is odd, (\ref{eq:6a}) implies that for all $j$ with $0 \le j \le k_{i+1}-k_i-4$ we have that $|\tilde{\ell}(x_{k_{i+1}-3})|, |\tilde{\ell}(y_{k_{i+1}-3})| > |\tilde{\ell}(x_{k_i+j})|, |\tilde{\ell}(y_{k_i+j})|$. \\
We now compare the labels of the vertices of $B_{k_{i+1}-2}$ and $B_{k_{i+1}-1}$ with the labels of the other vertices of $\Gamma_i$. Note that $k_{i+1}-k_i-3$ is congruent to 0 or 2 modulo 4, depending on whether the $i$-th segment is of type A or B, respectively. Thus (\ref{eq:j}) implies that
\begin{equation} \label{eq:s1}
\langle \tilde{\ell}(x_{k_{i+1}-3}), \tilde{\ell}(y_{k_{i+1}-3}) \rangle = (-1)^{b_i}(\left< 2k_{i+1}-3, -2k_{i+1}+3 \right>), \textrm{ or}
\end{equation}
\begin{equation} \label{eq:s2}
\begin{split}
\langle  \tilde{\ell}(x_{k_{i+1}-3}), \tilde{\ell}(y_{k_{i+1}-3}) \rangle = (-1)^{b_i}(\left< 2k_{i+1}-5, -2k_{i+1}+5 \right>), 
\end{split}
\end{equation}
depending on whether the $i$-th segment is of type A or B, respectively.
Suppose first, that the $i$-th segment is of type A. Then (\ref{eq:j}) implies that 
\begin{equation} \label{eq:s3}
\begin{split}
\langle \tilde{\ell}(x_{k_{i+1}-4}), \tilde{\ell}(y_{k_{i+1}-4}) \rangle & = (-1)^{b_i}(\left< 2k_{i+1}-7, -2k_{i+1}+5 \right>),
\end{split}
\end{equation}
and that $|\tilde{\ell}(x_{k_{i+1}-4})|, |\tilde{\ell}(y_{k_{i+1}-4})| > |\tilde{\ell}(x_{k_i+j})|, |\tilde{\ell}(y_{k_i+j})|$ for all $j$ with $0 \le j \le k_{i+1}-k_i-5$.
It follows from (\ref{eq:10}), (\ref{eq:11}), (\ref{eq:s1}) and (\ref{eq:s3}) that all the labels (according to $\tilde{\ell}$) of the vertices of $\Gamma_i$ are pairwise distinct and that the largest absolute value of a label in $\Gamma_i$ is $2k_{i+1}-1$.
We are left with the possibility that the $i$-th segment is of type B. Depending on whether $b_i$ is even or odd, (\ref{eq:8}), (\ref{eq:9a}) and (\ref{eq:s2}), or (\ref{eq:7a}), (\ref{eq:11}) and (\ref{eq:s2}), respectively, imply that all the labels of the vertices of $\Gamma_i$ are pairwise distinct and that the largest absolute value of a label in $\Gamma_i$ is $2k_{i+1}-1$.
Therefore, Claim 2 indeed holds for all $i$ with $1 \le i \le t$.

Note that Claim 2 implies that $\tilde{\ell}$ is a one to one correspondence from $V(\Gamma)$ to $\{1-2m, 3-2m, \ldots, 2m-1\}$. Combining this with Claim 1 we finally have that $\ell$ is a distance magic labeling of $\Gamma$.
\end{proof}

\section{Concluding remarks} \label{sec:future}

Recall that $QW(3)$ and $W(3)$ are isomorphic. However, for each $n \ge 4$ the wreath graph $W(n)$ has no 3-cycles, while each (distance magic) $QW$-graph of order $2n$ has 3-cycles (for instance, $(x_1, x_2, y_1)$). This shows that Theorem \ref{t1} provides an infinite family of tetravalent distance magic graphs which are not wreath graphs. In fact, it is easy to see that for each even order $n \ge 18$, there exists at least one distance magic $QW$-graph of order $n$. We mention that the above argument can also be used to verify that the $QW$-graphs of order at least 8 are not vertex-transitive (a graph is {\em vertex-transitive} if its automorphism group acts transitively on its vertex set), while all the wreath graphs of course are. The family of $QW$-graphs thus provides infinitely many tetravalent distance magic graphs which are not vertex-transitive.

A computer search reveals that out of 8\,037\,418 connected tetravalent graphs of order 16 only two are not ruled out by Corollary \ref{cor} as candidates for being distance magic. One of them is of course $W(8)$ and is thus distance magic. The other one turns out to also be distance magic but does not belong to the family of quasi wreath graphs (by Proposition \ref{proposition:necessary}). Intriguingly, this ``mysterious'' graph can be derived from $QW(7)$ using a construction from \cite{KFK}. 

The proof of \cite[Lemma 2.1]{KFK} shows that if $\Gamma$ is a tetravalent distance magic graph of order $n$ admitting a distance magic labeling (according to our definition) such that for some 4-cycle $C$ of $\Gamma$ the sum of the labels of each pair of antipodal vertices is 0, then one can construct a tetravalent distance magic graph $\Gamma'$ of order $n+2$ from $\Gamma$ by deleting the edges of $C$, adding two new vertices and joining them to each vertex of $C$. Applying this construction to the distance magic labeling of $QW(7)$ from Figure \ref{pic:smallgraphs} and the 4-cycle $(x_3, x_4, y_5, y_4)$ we thus obtain a tetravalent distance magic graph of order 16 which is clearly not a wreath graph. It thus must be the above mentioned ``mysterious''  distance magic graph of order 16. 

An exhaustive computer search reveals that there are 4 and 21 distance magic graphs of orders 17 and 18, respectively, which are neither wreath graphs nor quasi wreath graphs. It would therefore be interesting to further explore what kind of tetravalent distance magic graphs one can obtain by applying the construction from \cite{KFK} (and similar other constructions) to wreath and quasi wreath graphs.

\section*{Statements and Declarations}
Funding: \\
This work is supported in part by the Slovenian Research and Innovation Agency (Young researchers program, research program P1-0285 and research projects J1-2451 and J1-3001).
\medskip

\noindent
Competing interests: \\
The authors declare that they have no known competing financial or non-financial interests that are directly or indirectly related to this work.

\end{document}